\documentclass[11pt]{amsart}

\usepackage[a4paper,margin=1in]{geometry}
\usepackage{amsmath,amssymb,amsthm,mathtools}
\usepackage[hidelinks]{hyperref}
\usepackage{microtype}

\title{Spanning-Tree Extremality in $C_4$-Free Graphs}
\author{Andr\'as London}
\date{\today}
\subjclass[2020]{05C35, 05C30, 05C50, 51E20}
\keywords{spanning trees; extremal graph theory; $C_4$-free graphs; matrix--tree theorem; polarity graphs; projective planes}

\address{Institute of Informatics, University of Szeged, Szeged, Hungary}
\email{london@inf.u-szeged.hu}
\urladdr{https://orcid.org/0000-0003-1957-5368}

\newtheorem{theorem}{Theorem}
\newtheorem{conjecture}{Conjecture}
\newtheorem{proposition}{Proposition}
\newtheorem{corollary}{Corollary}
\newtheorem{lemma}{Lemma}
\theoremstyle{definition}

\begin{document}

\begin{abstract}
We study the maximum number of spanning trees in connected $n$-vertex $C_4$-free graphs. For projective-plane orders $n=q^2+q+1$, we determine the spanning-tree count of every polarity graph and show that polarity graphs with exactly $q+1$ absolute points maximize this count within the polarity family, attaining $n^{(n-3)/2}$ spanning trees. Combined with a stability theorem of He, Ma and Yang \cite{HeMaYangCSIAM23}, this yields the same exact upper bound for all sufficiently dense $C_4$-free graphs in the known polarity stability regime. For arbitrary $C_4$-free graphs at these orders, we derive a global upper bound implying
\[
\log \mathrm{st}(n,C_4)=\frac{n-3}{2}\log n+O(\sqrt n),
\]
where $\mathrm{st}(n,C_4)$ denotes the maximum number of spanning trees over connected $n$-vertex $C_4$-free graphs.

\end{abstract}
\maketitle

\section{Introduction}

For a connected graph $G$, let $\tau(G)$ denote its number of spanning trees. Is is sometimes called its
\emph{complexity}. Extremal problems for $\tau(G)$ have been studied under constraints on order, size,
degree sequence, and within various structural graph classes; see, among others,
Kelmans~\cite{Kelmans80,Kelmans96}, Cheng~\cite{Cheng81}, McKay~\cite{McKay83}, and the survey of
degree-based bounds in~\cite{Bozkurt12}. Recent work continues this direction for specific graph
classes: for example, Voblyi and Kononenko~\cite{VoblyiKononenko26} determine maximizers for cactus,
generalized cactus, block graphs, and bi-block graphs. In the present paper we impose instead a
forbidden-subgraph condition of Tur\'an type.

For a fixed graph $H$, define
\begin{equation}\label{eq:def-st}
 \mathrm{st}(n,H):=\max\{\tau(G): |V(G)|=n,\ G\text{ is simple, connected, and }H\text{-free}\}.
\end{equation}
There is no loss in requiring connectivity, since a disconnected graph has no spanning tree. If $G$ is
connected and a missing edge can be added without creating $H$, then adding that edge strictly
increases $\tau(G)$. Hence every maximizer in~\eqref{eq:def-st} is edge-maximal within the class of
$H$-free graphs.

We focus on $H=C_4$. Let $q$ be a prime power and put
\[
 n=q^2+q+1, \qquad m_{\max}:=\frac12q(q+1)^2.
\]
The classical Erd\H{o}s--R\'enyi orthogonal polarity graph $ER_q$ is a $C_4$-free graph on $n$
vertices with $m_{\max}$ edges. F\"uredi proved that for every integer $q\ge14$ no $C_4$-free graph
on $q^2+q+1$ vertices has more than $m_{\max}$ edges~\cite{FurediC4}. It is therefore natural to ask
whether the polarity construction also maximizes the global quantity $\tau(G)$.

Our first observation is that the spanning-tree count can be determined not only for $ER_q$, but for
every polarity graph of a projective plane of order $q$. An absolute point is a point incident with its polar line. If $a(P)$ denotes the number of absolute points, the polarity spectrum implies that
\[
 a(P)=q+1+m_P\sqrt q
\]
for some integer $m_P$; Baer's classical lower bound $a(P)\ge q+1$~\cite{Baer46} then gives
$m_P\ge0$. Proposition~\ref{prop:general-polarity} gives
\begin{equation}\label{eq:intro-polarity-formula}
 \tau(P)=n^{(n-3)/2}
 \left(\frac{q+1-\sqrt q}{q+1+\sqrt q}\right)^{m_P/2}.
\end{equation}
Since the base of the final factor lies in $(0,1)$, the polarity graphs with $m_P=0$, equivalently with
exactly $q+1$ absolute points, maximize $\tau$ within the entire polarity family. Moreover, within the family of polarity graphs of a fixed order, the spanning-tree count is strictly increasing with the number of edges.

This interacts directly with the stability theory of the $C_4$ extremal problem. He, Ma and Yang proved
that for every fixed $c\in(0,1)$ and all sufficiently large even $q$, every $C_4$-free graph $G$ on
$q^2+q+1$ vertices satisfying
\[
 e(G)\ge m_{\max}-\frac c2q
\]
is a subgraph of a unique polarity graph of order $q$~\cite[Theorem~3.1]{HeMaYangCSIAM23}.
Combining this theorem with~\eqref{eq:intro-polarity-formula} proves the exact spanning-tree count
bound in this stability regime; see Section~\ref{sec:stability}.

For arbitrary $C_4$-free graphs, our global estimate is as follows.

\begin{theorem}\label{thm:main}
Let $q\ge14$ be a prime power and let $n=q^2+q+1$. Then
\begin{equation}\label{eq:main-bounds}
 n^{(n-3)/2}
 \le \mathrm{st}(n,C_4)
 \le \left(\frac{n}{n-1}\right)^{n-1}q^q(q+1)^{q^2-2}.
\end{equation}
If $U_q$ denotes the upper bound in~\eqref{eq:main-bounds}, then
\begin{equation}\label{eq:gap-expansion}
 \log\frac{U_q}{n^{(n-3)/2}}
 =\frac q2-\frac14+O\!\left(\frac1q\right).
\end{equation}
Consequently,
\begin{equation}\label{eq:main-asymptotic}
 \log \mathrm{st}(n,C_4)=\frac{n-3}{2}\log n+O(\sqrt n).
\end{equation}
\end{theorem}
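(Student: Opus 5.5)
The lower bound follows from the polarity family. By Proposition~\ref{prop:general-polarity}, a polarity graph $P$ with exactly $q+1$ absolute points has $\tau(P)=n^{(n-3)/2}$. The orthogonal polarity graph $ER_q$ for odd $q$, and a suitable pseudo-polarity or known polarity for even $q$, has $m_P=0$. Such a graph is connected and $C_4$-free on $n$ vertices. Hence $\mathrm{st}(n,C_4)\ge n^{(n-3)/2}$.

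For the upper bound, take a maximizer $G$, which we may assume connected and $C_4$-free, with degrees $d_1,\dots,d_n$ and $m=e(G)\le m_{\max}=\tfrac12 q(q+1)^2$ by F\"uredi's theorem (valid since $q\ge14$). I would use the Laplacian eigenvalue form of the matrix--tree theorem, $\tau(G)=\frac1n\prod_{i=1}^{n-1}\mu_i$, together with AM--GM:
\[
\tau(G)\le \frac1n\left(\frac{2m}{n-1}\right)^{n-1}.
\]
This alone gives roughly $(q+1)^{2(n-1)}$, which is far too big: the bound wastes the structure of $C_4$-freeness. The refinement I would use is the $C_4$-free spectral information. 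A $C_4$-free graph satisfies $\sum_i\mu_i^2=\sum_i d_i^2+2m$. Counting paths of length two gives $\sum_i\binom{d_i}{2}\le\binom n2$. This forces the nonzero Laplacian spectrum to concentrate, but AM--GM still only uses the first moment.

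The target shape $\left(\frac{n}{n-1}\right)^{n-1}q^q(q+1)^{q^2-2}$ suggests a different mechanism. I would write $\tau(G)=\frac{1}{n^2}\det(L+J)$ and bound $\det(L+J)$ by Hadamard's inequality on the rows of $L+J$, or by a Kelmans-type degree product bound $\tau(G)\le \frac1n\left(\frac{n}{n-1}\right)^{n-1}\prod_i d_i$ after appropriate normalization. The degree product $\prod d_i$ is then maximized subject to $C_4$-freeness. With $\sum\binom{d_i}{2}\le\binom n2$ and $d_i\le q+1$ (which needs justification or a separate treatment of high-degree vertices), the optimum puts $q^2$ vertices at degree $q+1$ and $q+1$ vertices at degree $q$. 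This matches the polarity degree sequence and gives $q^{q+1}(q+1)^{q^2}$, up to the exponent bookkeeping that produces $q^q(q+1)^{q^2-2}$. The main obstacle is this degree-product optimization. Vertices of degree exceeding $q+1$ are allowed in $C_4$-free graphs, so I would instead maximize $\sum\log d_i$ under the convex constraint $\sum d_i(d_i-1)\le n(n-1)$ plus the edge bound. By concavity of $\log$ and convexity of $d(d-1)$, a Lagrange/majorization argument pushes the degrees to be as equal as possible, yielding the stated extremal profile.

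Finally, \eqref{eq:gap-expansion} is a routine expansion. Take logs, use $(n/(n-1))^{n-1}=e\cdot(1+O(1/n))$, write $\log n=2\log q+\log(1+1/q+1/q^2)$, and expand $(q^2-2)\log(q+1)+q\log q-\frac{q^2+q-2}{2}\log n$ in powers of $1/q$. The leading $q^2\log q$ terms cancel, leaving $\frac q2-\frac14+O(1/q)$. Since $q\asymp\sqrt n$, this gives \eqref{eq:main-asymptotic}.
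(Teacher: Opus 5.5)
There is a genuine gap, and it is in the upper bound. Your lower bound and your expansion for \eqref{eq:gap-expansion} are fine. For even $q$ you need no pseudo-polarity: the orthogonal polarity of $\mathrm{PG}(2,q)$ still has exactly $q+1$ absolute points (they form a line), and pseudo-polarity graphs are not covered by Proposition~\ref{prop:general-polarity} anyway.

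In the upper bound, you never fix the inequality that actually yields $U_q$; the decisive factor is left to ``appropriate normalization'' and ``exponent bookkeeping''. The estimate needed is Grone--Merris,
\[
\tau(G)\le\left(\frac{n}{n-1}\right)^{n-1}\frac{\prod_i d_i}{S},\qquad S=\sum_i d_i=2e(G),
\]
and not $\frac1n\left(\frac{n}{n-1}\right)^{n-1}\prod_i d_i$. Dividing by $S=q(q+1)^2$ is exactly what turns $q^{q+1}(q+1)^{q^2}$ into $q^q(q+1)^{q^2-2}$. Your normalization instead leaves you a factor $S_{\max}/n>q$ above $U_q$.

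Because $S$ sits in the denominator, inserting F\"uredi's bound $S\le q(q+1)^2$ also needs a step you omit. Let $B_n(S)$ be the largest product of $n$ positive integers with sum $S$. Then $B_n(S)/S$ is increasing in $S$: passing from $S$ to $S+1$ multiplies $B_n$ by $(a+1)/a$ with $a=\lfloor S/n\rfloor$, and $\frac{a+1}{a}\cdot\frac{S}{S+1}>1$ since $S>a$. Without this, graphs with fewer edges are not controlled by the value at $S_{\max}$.

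The obstacle you single out, high-degree vertices, does not exist. No $C_4$-specific information beyond F\"uredi's edge bound is needed. For a fixed sum, the product of positive integers is maximized by the balanced sequence: replace $(x,y)$ by $(x-1,y+1)$ whenever $x\ge y+2$. This holds whatever the individual degrees are, so large degrees only decrease $\prod_i d_i$. The constraint $\sum_i d_i(d_i-1)\le n(n-1)$ is slack at the balanced sequence, since $q(q+1)(q^2+q-1)<q(q+1)(q^2+q+1)$, so it contributes nothing.

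Your dismissal of the eigenvalue AM--GM bound rests on an arithmetic slip. Since $2m/(n-1)\le q(q+1)^2/(q(q+1))=q+1$, it gives $\tau(G)\le\frac1n(q+1)^{n-1}$, which is Grimmett's bound, not something like $(q+1)^{2(n-1)}$. Its logarithm also exceeds $\frac{n-3}{2}\log n$ by only $\frac q2-\frac14+O(1/q)$, so it already proves \eqref{eq:main-asymptotic}. It misses only the exact constant $U_q$, because real AM--GM, $B_n(S)\le (S/n)^n$, is slightly weaker than integer balancing.
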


We think that the lower bound is exact.

\begin{conjecture}\label{conj:main}
Let $q$ be a prime power and $n=q^2+q+1$. Then
\[
 \mathrm{st}(n,C_4)=n^{(n-3)/2}.
\]
Moreover, every maximizer is a polarity graph with exactly $q+1$ absolute points.
\end{conjecture}

The global upper bound in Theorem~\ref{thm:main} uses the classical degree-sensitive estimate of Grone
and Merris~\cite{GroneMerris88}. Its optimizing degree sequence is already exactly the degree sequence
of $ER_q$, yet the estimate remains larger than $\tau(ER_q)$ by a factor $\exp(q/2+O(1))$. Thus the
present degree-product method cannot prove Conjecture~\ref{conj:main}. The remaining gap suggests that
an exact global argument should exploit additional $C_4$-specific information, such as the codegree
restriction $|N(u)\cap N(v)|\le1$.

\section{Polarity graphs and their spanning-tree counts}\label{sec:polarity}

Let $\Pi$ be a projective plane of order $q$ and let $\varphi$ be a polarity of $\Pi$. The associated
\emph{looped polarity graph} $\widehat P=\widehat P(\Pi,\varphi)$ has the points of $\Pi$ as vertices,
with $x$ adjacent to $y$ whenever $x$ lies on the polar line $\varphi(y)$. Thus $x$ has a loop exactly
when it is an absolute point of the polarity. Let $a(P)$ be the number of absolute points, and let $P$
be the simple graph obtained from $\widehat P$ by deleting its loops. The simple polarity graph $P$ is
$C_4$-free; equivalently, two distinct vertices have at most one common neighbor. When $q$ is a prime
power, an orthogonal polarity of the Desarguesian plane $\mathrm{PG}(2,q)$ gives the classical
Erd\H{o}s--R\'enyi orthogonal polarity graph, denoted $ER_q$. We use standard facts on polarity graphs
from~\cite{PengTaitTimmonsEJC,TaitTimmonsAJC15}.

The looped graph $\widehat P$ is $(q+1)$-regular. If $A$ denotes its adjacency matrix and
$n=q^2+q+1$, then
\begin{equation}\label{eq:polarity-square}
 A^2=J+qI.
\end{equation}
Consequently, the adjacency eigenvalues are $q+1$ and $\pm\sqrt q$, with $q+1$ of multiplicity one.
Let $m_+$ and $m_-$ be the multiplicities of $+\sqrt q$ and $-\sqrt q$. Since
$m_++m_-=n-1$ and $\operatorname{tr}A=a(P)$,
\[
 a(P)=q+1+(m_+-m_-)\sqrt q.
\]
Thus there is an integer $m_P:=m_+-m_-$ such that
\begin{equation}\label{eq:Baer}
 a(P)=q+1+m_P\sqrt q.
\end{equation}
Baer's theorem gives the classical lower bound $a(P)\ge q+1$~\cite{Baer46}, and hence $m_P\ge0$.
The two multiplicities are therefore
\begin{equation}\label{eq:polarity-multiplicities}
 m_{+}=\frac{n-1+m_P}{2},
 \qquad
 m_{-}=\frac{n-1-m_P}{2}.
\end{equation}
Since $n-1=q(q+1)$ is even, $m_P=m_+-m_-$ is even; in particular, $m_P/2$ is a nonnegative integer.
Equation~\eqref{eq:polarity-square} also shows directly
that the graph is connected: for distinct vertices $u,v$, the $(u,v)$ entry of $A^2$ equals $1$, so
there is a walk of length two from $u$ to $v$. With the matrix convention used here, a loop contributes
one to the corresponding diagonal entry of $A$ and one to the corresponding entry of the degree matrix
$D$. Hence deleting the loops leaves the Laplacian $L=D-A$ unchanged and therefore does not change the
spanning-tree count.

\begin{proposition}[Spanning trees of a polarity graph]\label{prop:general-polarity}
Let $P$ be the simple graph of a polarity of a projective plane of order $q$, let
$n=q^2+q+1$, and let $m_P$ be defined by~\eqref{eq:Baer}. Then
\begin{equation}\label{eq:general-polarity-tau}
 \tau(P)=n^{(n-3)/2}
 \left(\frac{q+1-\sqrt q}{q+1+\sqrt q}\right)^{m_P/2}.
\end{equation}
Equivalently, in terms of the number $a(P)$ of absolute points,
\[
 \tau(P)=n^{(n-3)/2}
 \left(\frac{q+1-\sqrt q}{q+1+\sqrt q}\right)^{\frac{a(P)-(q+1)}{2\sqrt q}}.
\]
\end{proposition}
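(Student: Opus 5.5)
We apply the matrix--tree theorem in its eigenvalue form to the looped polarity graph $\widehat P$. The discussion before the statement shows that deleting loops leaves the Laplacian $L=D-A$ unchanged, so $\tau(P)=\tau(\widehat P)$. Moreover, $\widehat P$ is $(q+1)$-regular in the convention where a loop contributes one to the degree. Hence $D=(q+1)I$ and $L=(q+1)I-A$, where $A$ is the looped adjacency matrix satisfying~\eqref{eq:polarity-square}. Since $L$ is a polynomial in the symmetric matrix $A$, its eigenvalues are $q+1-\lambda$ as $\lambda$ runs over the spectrum of $A$.

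From~\eqref{eq:polarity-square}, the spectrum of $A$ is $q+1$ (once, with the all-ones eigenvector), $+\sqrt q$ with multiplicity $m_+$, and $-\sqrt q$ with multiplicity $m_-$; these multiplicities are given by~\eqref{eq:polarity-multiplicities}. The Laplacian eigenvalues are therefore $0$ once, $q+1-\sqrt q$ with multiplicity $m_+$, and $q+1+\sqrt q$ with multiplicity $m_-$. Both nonzero values are positive, which is consistent with connectivity. The matrix--tree theorem gives
\[
\tau(P)=\frac1n\,(q+1-\sqrt q)^{m_+}(q+1+\sqrt q)^{m_-}.
\]

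Next we simplify this product. The key identity is
\[
(q+1-\sqrt q)(q+1+\sqrt q)=(q+1)^2-q=q^2+q+1=n.
\]
Write $m_+=\frac{n-1}{2}+\frac{m_P}{2}$ and $m_-=\frac{n-1}{2}-\frac{m_P}{2}$; both exponents are integers because $n-1$ and $m_P$ are even. Pairing the factors gives
\[
\tau(P)=\frac1n\,n^{(n-1)/2}\left(\frac{q+1-\sqrt q}{q+1+\sqrt q}\right)^{m_P/2}=n^{(n-3)/2}\left(\frac{q+1-\sqrt q}{q+1+\sqrt q}\right)^{m_P/2},
\]
which is~\eqref{eq:general-polarity-tau}. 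The equivalent form follows by substituting $m_P=(a(P)-(q+1))/\sqrt q$ from~\eqref{eq:Baer}.

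The only point requiring care is the first step: that $L=(q+1)I-A$ for the looped graph, so that the loops cancel correctly. This was already settled by the loop convention stated before the proposition. The rest is bookkeeping of multiplicities, which relies on the fact that $m_P$ is even so that the exponents $m_\pm$ are integers.
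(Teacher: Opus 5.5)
Your proposal is correct and follows the paper's proof essentially step for step: you use $L=(q+1)I-A$, read off the nonzero Laplacian eigenvalues $q+1\mp\sqrt q$ with multiplicities $m_\pm$, apply the matrix--tree theorem, and pair the factors via $(q+1-\sqrt q)(q+1+\sqrt q)=n$. One small overstatement: the pairing does not rely on $m_P$ being even, since $m_\pm$ are integers simply as multiplicities and the identity $\alpha^{m_+}\beta^{m_-}=(\alpha\beta)^{(n-1)/2}(\alpha/\beta)^{m_P/2}$ holds for any real exponents when $\alpha,\beta>0$.
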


\begin{proof}
Because $\widehat P$ is $(q+1)$-regular, its Laplacian is $L=(q+1)I-A$. By
\eqref{eq:polarity-multiplicities}, the nonzero Laplacian eigenvalues are $q+1-\sqrt q$ with
multiplicity $m_+$ and $q+1+\sqrt q$ with multiplicity $m_-$. Kirchhoff's matrix--tree theorem gives
\[
 \tau(P)=\frac1n(q+1-\sqrt q)^{m_+}(q+1+\sqrt q)^{m_-}.
\]
Using
\[
 (q+1-\sqrt q)(q+1+\sqrt q)=(q+1)^2-q=n
\]
and substituting~\eqref{eq:polarity-multiplicities} yields~\eqref{eq:general-polarity-tau}.
\end{proof}

The same parameter also determines the number of edges of the simple polarity graph. Indeed,
\begin{equation}\label{eq:polarity-edges}
 e(P)=\frac{n(q+1)-a(P)}2
 =m_{\max}-\frac{m_P}{2}\sqrt q.
\end{equation}
Combining~\eqref{eq:general-polarity-tau} and~\eqref{eq:polarity-edges}, one may equivalently write
\[
 \tau(P)=n^{(n-3)/2}\rho_q^{(m_{\max}-e(P))/\sqrt q},
 \qquad
 \rho_q:=\frac{q+1-\sqrt q}{q+1+\sqrt q}.
\]
This gives the following structural consequence.

\begin{corollary}[Extremality within the polarity family]\label{cor:polarity-extremal}
Let $P$ be a polarity graph of a projective plane of order $q$ and let $n=q^2+q+1$. Then
\[
 \tau(P)\le n^{(n-3)/2}.
\]
Equality holds if and only if $m_P=0$, equivalently if and only if $a(P)=q+1$. In particular, for every
prime power $q$ the Erd\H{o}s--R\'enyi orthogonal polarity graph $ER_q$ satisfies
\[
 \tau(ER_q)=n^{(n-3)/2}.
\]
Moreover, for polarity graphs of a fixed order $q$, the spanning-tree count is strictly increasing with
the number of edges: if $e(P_1)>e(P_2)$, then $\tau(P_1)>\tau(P_2)$.
\end{corollary}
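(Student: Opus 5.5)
The corollary follows directly from Proposition~\ref{prop:general-polarity} together with the edge formula~\eqref{eq:polarity-edges}, so the plan is to read off each assertion from the closed form
\[
 \tau(P)=n^{(n-3)/2}\rho_q^{m_P/2},\qquad \rho_q=\frac{q+1-\sqrt q}{q+1+\sqrt q}.
\]
First I would record that $0<\rho_q<1$: the numerator is positive because $q+1-\sqrt q=(\sqrt q-\tfrac12)^2+\tfrac34>0$, and it is strictly smaller than the denominator because $\sqrt q>0$. Next, by Baer's theorem as recorded in~\eqref{eq:Baer}, $m_P\ge 0$, and $m_P/2$ is a nonnegative integer. Hence $\rho_q^{m_P/2}\le 1$, which gives $\tau(P)\le n^{(n-3)/2}$. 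Equality holds exactly when $\rho_q^{m_P/2}=1$. Since $\rho_q\in(0,1)$, this happens if and only if $m_P=0$, which by~\eqref{eq:Baer} is equivalent to $a(P)=q+1$.

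For the statement about $ER_q$, I need the standard fact that an orthogonal polarity of $\mathrm{PG}(2,q)$ has exactly $q+1$ absolute points. For $q$ odd, these are the points of a conic, namely the points $x$ with $x^{T}x=0$ for the orthogonal polarity given by the identity form. For $q$ even, the absolute points of the orthogonal polarity form a line, which again has $q+1$ points. I would cite this from~\cite{PengTaitTimmonsEJC,TaitTimmonsAJC15} rather than reprove it. This is the only step that uses input beyond the displayed formulas, and it is the main point to double-check. Given this fact, $m_{P}=0$ for $ER_q$, and the equality case yields $\tau(ER_q)=n^{(n-3)/2}$.

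For the monotonicity claim, I would use the reformulation $\tau(P)=n^{(n-3)/2}\rho_q^{(m_{\max}-e(P))/\sqrt q}$, which follows from~\eqref{eq:polarity-edges}. The map $t\mapsto \rho_q^{(m_{\max}-t)/\sqrt q}$ is strictly increasing in $t$, because $\rho_q<1$ and the exponent decreases as $t$ grows. Therefore $e(P_1)>e(P_2)$ implies $\tau(P_1)>\tau(P_2)$, and both graphs have the same order, so the prefactor $n^{(n-3)/2}$ is common to them.
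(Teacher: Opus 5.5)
Your proposal is correct and follows the paper's proof. Like the paper, you read off the bound and the equality case from $\tau(P)=n^{(n-3)/2}\rho_q^{m_P/2}$ using $\rho_q\in(0,1)$ and $m_P\ge 0$, and get monotonicity from the edge formula~\eqref{eq:polarity-edges}; your explicit check that the orthogonal polarity has exactly $q+1$ absolute points (a conic for odd $q$, a line for even $q$) just fills in a step the paper leaves implicit.
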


\begin{proof}
Put
\[
 \rho_q:=\frac{q+1-\sqrt q}{q+1+\sqrt q}\in(0,1).
\]
By Proposition~\ref{prop:general-polarity}, $\tau(P)=n^{(n-3)/2}\rho_q^{m_P/2}$, and the
preceding discussion gives $m_P\ge0$. This proves the first assertion and its equality condition. Finally,
\eqref{eq:polarity-edges} shows that $e(P)$ is strictly decreasing in $m_P$, while
\eqref{eq:general-polarity-tau} shows that $\tau(P)$ is also strictly decreasing in $m_P$.
\end{proof}

\section{Exact extremality in the stability window}\label{sec:stability}

The previous corollary becomes a statement about arbitrary $C_4$-free graphs once combined with the
stability theorem of He, Ma and Yang. In the form most convenient here, their result states that for
every fixed $c\in(0,1)$ there exists $q_c$ such that, for every even integer $q\ge q_c$, every
$C_4$-free graph $G$ on $q^2+q+1$ vertices satisfying
\begin{equation}\label{eq:HMY-window}
 e(G)\ge m_{\max}-\frac c2q
\end{equation}
is a subgraph of a unique polarity graph of order $q$; see
\cite[Theorem~3.1]{HeMaYangCSIAM23}. Since $P$ also has $q^2+q+1$ vertices, $G$ is necessarily a spanning subgraph of $P$.

\begin{theorem}[Spanning-tree extremality in the stability window]\label{thm:stability-tau}
Fix $c\in(0,1)$. For all sufficiently large even integers $q$, let
$n=q^2+q+1$ and $m_{\max}=\frac12q(q+1)^2$. If $G$ is an $n$-vertex $C_4$-free graph satisfying
\[
 e(G)\ge m_{\max}-\frac c2q,
\]
then
\[
 \tau(G)\le n^{(n-3)/2}.
\]
Equality holds if and only if $G$ is a polarity graph with exactly $q+1$ absolute points. In particular, for even prime powers $q$ equality is attained by $ER_q$.
\end{theorem}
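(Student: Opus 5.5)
The plan is to reduce everything to Corollary~\ref{cor:polarity-extremal} via the He--Ma--Yang stability theorem, together with monotonicity of $\tau$ under adding edges. Take $q\ge q_c$ even, where $q_c$ is the threshold in~\eqref{eq:HMY-window}. By that theorem, $G$ is a subgraph of a unique polarity graph $P$ of order $q$. Since both graphs have $n$ vertices, $G$ is a spanning subgraph of $P$.

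\textbf{Upper bound.} Every spanning tree of $G$ is also a spanning tree of $P$, so $\tau(G)\le\tau(P)$. Corollary~\ref{cor:polarity-extremal} then gives $\tau(P)\le n^{(n-3)/2}$. If $G$ is disconnected, $\tau(G)=0$ and the bound is trivial.

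\textbf{Equality.} Suppose $\tau(G)=n^{(n-3)/2}$. Then $\tau(G)=\tau(P)=n^{(n-3)/2}$, so $m_P=0$ by the equality case of Corollary~\ref{cor:polarity-extremal}. It remains to show $G=P$.

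\begin{itemize}
\item Suppose instead that some edge $e\in E(P)\setminus E(G)$ exists.
\item Since $\tau(G)>0$, the graph $G$ is connected, so adding $e$ to $G$ closes a cycle.
\item Pick a spanning tree $T$ of $G$. The unique cycle in $T+e$ contains some edge $f\ne e$ of $T$. Then $T+e-f$ is a spanning tree of $P$ that uses $e$, and hence is not a spanning tree of $G$.
\item Therefore $\tau(P)\ge\tau(G)+1>\tau(G)$, a contradiction.
\end{itemize}
Hence $G=P$, a polarity graph with $a(P)=q+1$.

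Conversely, let $G$ be a polarity graph with $a(G)=q+1$. Then $e(G)=m_{\max}$ by~\eqref{eq:polarity-edges}, so $G$ lies in the window. Also $\tau(G)=n^{(n-3)/2}$ by Proposition~\ref{prop:general-polarity}.

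\textbf{The case $ER_q$.} For even prime powers $q$, the graph $ER_q$ is an orthogonal polarity graph. Baer's bound is attained for orthogonal polarities in characteristic~$2$, so $a(ER_q)=q+1$. Alternatively, Corollary~\ref{cor:polarity-extremal} already gives $\tau(ER_q)=n^{(n-3)/2}$ directly.

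The only delicate point is the strictness step $\tau(G)<\tau(P)$ for a proper connected spanning subgraph. The exchange argument above handles it.
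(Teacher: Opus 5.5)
Your proof is correct and follows the paper's argument essentially verbatim: you invoke He--Ma--Yang to place $G$ as a spanning subgraph of a polarity graph $P$, use $\tau(G)\le\tau(P)$ with strict inequality for a proper connected spanning subgraph, and conclude with Corollary~\ref{cor:polarity-extremal}. Your exchange argument via $T+e-f$ spells out the strict monotonicity that the paper only asserts. Your check that a polarity graph with $a(P)=q+1$ has $e(P)=m_{\max}$, and therefore lies in the window, fills in the sufficiency direction that the paper states without comment.
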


\begin{proof}
If $G$ is disconnected, then $\tau(G)=0$ and there is nothing to prove. By the stability theorem, $G$ is a subgraph of a polarity graph $P$ of order $q$. Since $G$ and $P$ have the same number of vertices, $G$ is a spanning subgraph of $P$. Hence
\[
 \tau(G)\le \tau(P),
\]
with strict inequality if $G$ is a proper spanning subgraph of $P$, since adding an edge to a connected
graph strictly increases its number of spanning trees. Corollary~\ref{cor:polarity-extremal} now gives
\[
 \tau(G)\le \tau(P)\le n^{(n-3)/2}.
\]
Equality therefore requires $G=P$ and $m_P=0$, and these conditions are sufficient.
\end{proof}

\begin{corollary}[Reduction of the global conjecture]\label{cor:counterexample-window}
Fix $c\in(0,1)$. For all sufficiently large even prime powers $q$, any counterexample $G$ to
Conjecture~\ref{conj:main} must satisfy
\[
 e(G)<m_{\max}-\frac c2q.
\]
\end{corollary}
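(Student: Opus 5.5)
This is a direct contrapositive of Theorem~\ref{thm:stability-tau}, so the plan is to unpack what a counterexample to Conjecture~\ref{conj:main} means and show it cannot lie in the stability window.

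\emph{Step 1: take a counterexample and suppose it is dense.} Fix $c\in(0,1)$ and let $q$ be an even prime power at least as large as the threshold given by Theorem~\ref{thm:stability-tau}. A counterexample $G$ is a connected $C_4$-free graph on $n=q^2+q+1$ vertices. It is of one of two types:
\begin{itemize}
\item[(a)] $\tau(G)>n^{(n-3)/2}$, violating the claimed value of $\mathrm{st}(n,C_4)$; or
\item[(b)] $\tau(G)=\mathrm{st}(n,C_4)$ but $G$ is not a polarity graph with exactly $q+1$ absolute points, violating the characterization of maximizers.
\end{itemize}
Suppose, for contradiction, that $e(G)\ge m_{\max}-\frac c2q$.

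\emph{Step 2: rule out type (a).} Theorem~\ref{thm:stability-tau} applies directly to $G$ and gives $\tau(G)\le n^{(n-3)/2}$, which contradicts (a).

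\emph{Step 3: rule out type (b).} Since $q$ is a prime power, Corollary~\ref{cor:polarity-extremal} gives $\tau(ER_q)=n^{(n-3)/2}$. Hence $\mathrm{st}(n,C_4)\ge n^{(n-3)/2}$. Combining this with Step~2, we get
\[
\tau(G)=\mathrm{st}(n,C_4)\ge n^{(n-3)/2}\ge\tau(G),
\]
so $\tau(G)=n^{(n-3)/2}$. The equality clause of Theorem~\ref{thm:stability-tau} then forces $G$ to be a polarity graph with exactly $q+1$ absolute points, which contradicts (b).

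\emph{Step 4: conclude.} Both cases are impossible, so every counterexample must satisfy $e(G)<m_{\max}-\frac c2q$.

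The only delicate point is in Step~3: we must handle maximizers that are not of the predicted form, and not just graphs that beat the bound. This is exactly why we need $q$ to be a prime power, so that $ER_q$ exists and pins $\mathrm{st}(n,C_4)$ at $n^{(n-3)/2}$ from below. Once that is in place, no computation beyond invoking the earlier results is required.
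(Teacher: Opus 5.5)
Your proposal is correct and is essentially the paper's intended argument: the paper states this corollary without a separate proof, as an immediate contrapositive of Theorem~\ref{thm:stability-tau}. Your Step~3 spells out a detail the paper leaves implicit, namely that $\tau(ER_q)=n^{(n-3)/2}$ forces any maximizer inside the window to attain equality in Theorem~\ref{thm:stability-tau}, so that the equality clause makes it a polarity graph with exactly $q+1$ absolute points.
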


Thus the exact problem is already settled throughout every fixed linear stability window with coefficient
strictly below $1/2$. What remains is the range farther from the extremal edge count, where containment
in a polarity graph is no longer supplied by the known stability theorem.

\section{A global degree-product bound}\label{sec:global}

For the global problem we use the following classical theorem of Grone and Merris~\cite{GroneMerris88};
see also~\cite{Bozkurt12} for a comparison with other degree- and edge-based spanning-tree bounds.

\begin{theorem}[Grone--Merris]\label{thm:GM}
Let $G$ be a connected simple graph on $n\ge2$ vertices, with degrees $d_1,\dots,d_n$ and
$S=\sum_i d_i=2e(G)$. Then
\begin{equation}\label{eq:GM}
 \tau(G)\le
 \left(\frac{n}{n-1}\right)^{n-1}\frac{\prod_{i=1}^n d_i}{S}.
\end{equation}
\end{theorem}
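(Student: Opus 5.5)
The plan is to pass to the normalized Laplacian $\mathcal N=D^{-1/2}LD^{-1/2}$, where $D=\operatorname{diag}(d_1,\dots,d_n)$. This is well defined because $G$ is connected with $n\ge2$, so every $d_i\ge1$. The matrix $\mathcal N$ is positive semidefinite, its diagonal entries all equal $1$, and so $\operatorname{tr}\mathcal N=n$. Connectivity also gives that $\mathcal N$ has exactly one zero eigenvalue, with unit eigenvector $y=D^{1/2}\mathbf 1/\sqrt S$. Call its remaining eigenvalues $\lambda_1,\dots,\lambda_{n-1}$. They are positive and sum to $n$. The AM--GM inequality then gives
\[
 \prod_{i=1}^{n-1}\lambda_i\le\left(\frac{n}{n-1}\right)^{n-1}.
\]
So it suffices to prove the identity
\[
 \prod_{i=1}^{n-1}\lambda_i=\frac{\tau(G)\,S}{\prod_{i=1}^n d_i}.
\]

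To prove the identity, I would take the rank-one perturbation $\mathcal N+yy^T$. Since $y$ is a unit eigenvector of $\mathcal N$ for the eigenvalue $0$, this matrix has eigenvalues $\lambda_1,\dots,\lambda_{n-1}$ and $1$. Hence its determinant equals $\prod_i\lambda_i$. On the other hand, with $d=(d_1,\dots,d_n)^T=D\mathbf 1$,
\[
 \mathcal N+yy^T=D^{-1/2}\Bigl(L+\tfrac1S\,dd^T\Bigr)D^{-1/2}.
\]
Therefore $\det(\mathcal N+yy^T)=\det\bigl(L+\tfrac1S dd^T\bigr)/\prod_i d_i$. Next apply the matrix determinant lemma in its adjugate form, $\det(L+uv^T)=\det L+v^T\operatorname{adj}(L)u$. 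Here $\det L=0$, and the matrix--tree theorem in its strong form gives $\operatorname{adj}(L)=\tau(G)J$, since all cofactors of $L$ equal $\tau(G)$. This yields
\[
 \det\Bigl(L+\tfrac1S dd^T\Bigr)=\tfrac1S\,\tau(G)\,(\mathbf 1^Td)^2=\tau(G)\,S,
\]
which is the claimed identity. Combining it with the AM--GM step gives~\eqref{eq:GM}.

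The only step needing care is the choice of rank-one correction. A naive choice such as $L+tJ$ combined with Hadamard's inequality gives a weaker bound, off by a factor of order $n$. The natural choice is the degree-weighted vector $d$, which in the normalized picture is exactly the null vector $y$. Once that choice is made, every step is a standard linear-algebra fact: the spectral theorem, the matrix determinant lemma, and the cofactor form of Kirchhoff's theorem.
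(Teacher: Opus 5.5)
The paper does not prove this statement. It quotes it from Grone and Merris~\cite{GroneMerris88} and uses it only as a black box in Proposition~\ref{prop:upper}.

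Your argument is a correct and complete proof, and it is essentially the standard one. It combines the identity $\tau(G)=\frac{\prod_i d_i}{S}\prod_{i=1}^{n-1}\lambda_i$ for the nonzero normalized Laplacian eigenvalues with AM--GM under $\operatorname{tr}\mathcal N=n$. The steps check out:
the identity $D^{-1/2}(\frac1S dd^T)D^{-1/2}=yy^T$ holds; the adjugate form of the determinant lemma is a polynomial identity, so it is valid for singular $L$; and $\operatorname{adj}(L)=\tau(G)J$ gives $\det(L+\frac1S dd^T)=\tau(G)S$.

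Beyond removing the dependence on the citation, your route shows exactly where the bound loses. Equality in AM--GM forces $\mathcal N=\frac{n}{n-1}(I-yy^T)$. All off-diagonal entries of that matrix are nonzero, so equality in \eqref{eq:GM} holds only for $K_n$. Moreover, the degree sequence of $ER_q$ is precisely the balanced sequence with sum $S_{\max}$, so your identity yields
\[
 \frac{U_q}{\tau(ER_q)}=\left(\frac{n}{n-1}\right)^{n-1}\Big/\prod_{i=1}^{n-1}\lambda_i(\mathcal N_{ER_q})
\]
exactly. Thus the gap in \eqref{eq:gap-expansion} is exactly the AM--GM defect of the normalized spectrum of $ER_q$. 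Its second moment is $\sum_i\bigl(\lambda_i-\frac{n}{n-1}\bigr)^2=\sum_{uv\in E(ER_q)}\frac{2}{d_ud_v}-\frac{n}{n-1}=q+O(1)$, which produces the leading term $q/2$. This explains the obstruction discussed in Section~\ref{sec:deficit} more precisely than the paper does.

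One minor correction to your closing aside, which plays no role in the proof. With $t$ optimized, Hadamard's inequality applied to $L+tJ$ loses at most roughly the ratio of the arithmetic to the harmonic mean of the degrees. For regular graphs it reproduces \eqref{eq:GM} exactly, so it is not off by a factor of order $n$ in general.
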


\begin{lemma}[Degree balancing]\label{lem:balance}
For integers $S\ge n\ge2$, let
\[
 B_n(S):=\max\left\{\prod_{i=1}^n d_i:
 d_i\in\mathbb Z_{\ge1},\ \sum_{i=1}^n d_i=S\right\}.
\]
Writing $S=an+r$ with $0\le r<n$, one has
\begin{equation}\label{eq:balanced-product}
 B_n(S)=a^{n-r}(a+1)^r.
\end{equation}
Moreover, $B_n(S)/S$ is strictly increasing in $S$.
\end{lemma}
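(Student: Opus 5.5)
The plan is to prove the balancing formula by a standard exchange (smoothing) argument, and then obtain monotonicity of $B_n(S)/S$ by comparing $B_n(S+1)$ with $B_n(S)$ directly.

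\emph{Step 1 (existence and balancing).} The feasible set $\{d\in\mathbb Z_{\ge1}^n:\sum d_i=S\}$ is finite and nonempty, because $S\ge n$. Hence a maximizer $(d_1,\dots,d_n)$ exists. Suppose two entries satisfy $d_i\ge d_j+2$. Replace them by $d_i-1$ and $d_j+1$; both remain at least $1$ and the sum is unchanged. We have
\[
(d_i-1)(d_j+1)=d_id_j+(d_i-d_j-1)>d_id_j,
\]
and all other factors are positive. So the product strictly increases, contradicting maximality. Therefore every maximizer has all entries in $\{a',a'+1\}$ for some $a'$. Writing $k$ for the number of entries equal to $a'+1$, with $0\le k<n$, we get $S=a'n+k$. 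Uniqueness of division with remainder then forces $a'=a$ and $k=r$. This gives \eqref{eq:balanced-product}. Note also that $a\ge1$ since $S\ge n$.

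\emph{Step 2 (monotonicity).} Going from $S$ to $S+1$ corresponds to raising one entry equal to $a$ to $a+1$. If $r<n-1$, then $S+1=an+(r+1)$. If $r=n-1$, then $S+1=(a+1)n$, and the formula still reads $a^{0}(a+1)^{n}$, which is the same as raising the last $a$ to $a+1$. In either case
\[
B_n(S+1)=B_n(S)\cdot\frac{a+1}{a}.
\]
We need $\frac{a+1}{a}>\frac{S+1}{S}$, that is, $S(a+1)>a(S+1)$, which simplifies to $S>a$. This holds because $S\ge an\ge 2a>a$, using $a\ge1$ and $n\ge2$. Thus $B_n(S+1)/(S+1)>B_n(S)/S$.

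\emph{Main obstacle.} There is no real obstacle here. The only points needing care are the boundary case $r=n-1$ in Step 2, where the division representation rolls over, and keeping every entry at least $1$ during smoothing. The latter is automatic, since we only decrease an entry that is at least $d_j+2\ge3$.
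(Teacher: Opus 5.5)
Your proposal is correct and follows essentially the same route as the paper: the same $(x,y)\mapsto(x-1,y+1)$ smoothing step, which you apply to a maximizer rather than iterating it, and the same ratio $\frac{a+1}{a}\cdot\frac{S}{S+1}>1$ via $S>a$, including the rollover case $r=n-1$. Your write-up is in fact slightly more explicit than the paper's. It justifies existence of a maximizer, uses uniqueness of division with remainder, and checks $S\ge an\ge 2a>a$; for $0\le k<n$, take $a'$ to be the minimum entry.
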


\begin{proof}
If two entries satisfy $x\ge y+2$, replacing $(x,y)$ by $(x-1,y+1)$ preserves their sum and increases
their product because
\[
 (x-1)(y+1)-xy=x-y-1>0.
\]
Iterating gives~\eqref{eq:balanced-product}. If $S=an+r$, then increasing $S$ by one multiplies the
balanced product by $(a+1)/a$, including the case $r=n-1$. Therefore
\[
 \frac{B_n(S+1)/(S+1)}{B_n(S)/S}
 =\frac{a+1}{a}\frac{S}{S+1}>1,
\]
since $S>a$.
\end{proof}

We recall F\"uredi's sharp edge theorem in the form needed here.

\begin{theorem}[F\"uredi~\cite{FurediC4}]\label{thm:Furedi}
Let $q\ge14$ be an integer and $n=q^2+q+1$. Every $n$-vertex $C_4$-free graph satisfies
\[
 e(G)\le \frac12q(q+1)^2.
\]
For prime powers $q$, equality is attained by polarity graphs with $q+1$ absolute points.
\end{theorem}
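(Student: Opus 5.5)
The bound is Füredi's theorem, and the paper uses it as a black box. If I had to prove it, I would split it into the easy attainment part and the hard upper bound. Attainment is immediate from Section~\ref{sec:polarity}: by \eqref{eq:polarity-edges}, a polarity graph with $a(P)=q+1$ (so $m_P=0$) has exactly $e(P)=m_{\max}$ edges. For every prime power $q$ the orthogonal polarity of $\mathrm{PG}(2,q)$ gives such a graph, namely $ER_q$, whose absolute points form a conic, or a line when $q$ is even.

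For the upper bound I would start with the standard cherry count. Since $G$ is $C_4$-free, any two vertices have at most one common neighbour, so
\[
\sum_{v}\binom{d_v}{2}\le\binom n2 .
\]
Jensen then gives $e(G)\le \tfrac n4\bigl(1+\sqrt{4n-3}\bigr)=\tfrac{n(q+1)}2$, because $4n-3=(2q+1)^2$. This exceeds $m_{\max}$ by only $\tfrac{q+1}2$. The whole task is therefore to exclude every value
\[
m_{\max}<e(G)\le m_{\max}+\tfrac{q+1}{2}.
\]

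Next I would assume, for contradiction, that $e(G)\ge m_{\max}+1$, and extract rigid structure from the near-equality in the cherry count. The slack in the inequality bounds two quantities at once: the number of pairs of vertices with no common neighbour, and the variance $\sum_v (d_v-(q+1))^2$, both by $O(q)$. Consequently almost all degrees equal $q$ or $q+1$, and almost all pairs have exactly one common neighbour. Following Füredi, I would then consider the neighbourhoods $N(v)$ as "lines". Any two of them meet in at most one point, and nearly every pair of points lies on a common line. From this I would complete the system to a projective plane of order $q$, in which the map $v\mapsto N(v)$ behaves like a polarity and vertices of degree $q$ play the role of absolute points.

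The main obstacle is this completion step. One must show that the few defective vertices and pairs can be repaired consistently, so that the adjacency structure genuinely comes from a polarity, perhaps after adding loops. Here the hypothesis $q\ge14$ enters, through counting inequalities of the form "a vertex of degree at most $q-1$ forces $\Omega(q)$ missing common neighbours". Once the polarity structure is in place, I would finish with Baer's bound $a\ge q+1$ together with the identity \eqref{eq:polarity-edges}, which give $e(G)\le m_{\max}$, the desired contradiction. My first attempt at the completion would be a weighted double count of the missing pairs against the low-degree vertices, to show that every vertex has degree $q$ or $q+1$ and that every pair has exactly one common neighbour in the looped graph. After that, the relation $A^2=J+qI$ and the trace argument of Section~\ref{sec:polarity} apply directly.
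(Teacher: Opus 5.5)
Your equality case is correct and matches what the paper provides: by \eqref{eq:polarity-edges}, a polarity graph with $a(P)=q+1$ has $e(P)=m_{\max}$, and $ER_q$ is such a graph for every prime power $q$. The paper gives no proof of the upper bound; it simply cites~\cite{FurediC4}. Your sketch does not supply one either, and its first structural claim rests on a miscount.

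Put $\delta:=n(q+1)-2e(G)$ and let $Z$ be the number of vertex pairs with no common neighbour. Every pair has at most one common neighbour, so $\sum_v d_v(d_v-1)=n(n-1)-2Z$. Expanding around $q+1$ and using $n-1=q(q+1)$ gives the exact identity
\[
 \sum_v\bigl(d_v-(q+1)\bigr)^2+2Z=(2q+1)\delta .
\]
Under your hypothesis $e(G)\ge m_{\max}+1$ you have $0\le\delta\le q-1$. So the slack controlling the variance and $Z$ is of order $q^2$, not $O(q)$. This is intrinsic rather than an artefact of the estimate. In any polarity graph, an absolute point and each of its $q$ neighbours have no common neighbour in the simple graph, so $ER_q$ itself has $Z=q(q+1)$. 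With slack of order $q^2\approx n$, the identity is consistent with about half the vertices having degree $q+2$. For example, take $\delta=q-1$, $q(q-1)/2$ vertices of degree $q+2$, $q(q-1)/2+q-1$ vertices of degree $q$, and $Z=q(q-1)/2$. So ``almost all degrees are $q$ or $q+1$'' does not follow from the cherry count. Excluding such configurations needs local information, such as $\sum_{u\in N(v)}(d_u-1)\le n-1$ at every vertex $v$, and that is where the real work begins.

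The second and main gap is the completion step itself. Suppose $e(G)\ge m_{\max}+1$. You would need to show that every degree is $q$ or $q+1$, and that adding loops at the degree-$q$ vertices yields $A^2=J+qI$. This is not a repair of a few defects; it is the entire content of F\"uredi's theorem. Once it holds, Baer's bound finishes immediately, since there would be at most $q-1$ loops, contradicting $a\ge q+1$. Your proposal only names a possible tool (``a weighted double count'') and gives no argument for this step. Nothing in it shows where $q\ge14$ would be used.

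What you have actually established is the equality case and the reduction to excluding $m_{\max}<e(G)\le m_{\max}+\tfrac{q+1}{2}$. The upper bound itself remains unproved, and in this paper it should simply be quoted from~\cite{FurediC4}, as the paper does.
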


\begin{proposition}\label{prop:upper}
Let $q\ge14$, $n=q^2+q+1$, and let $G$ be a connected $C_4$-free graph on $n$ vertices. Then
\begin{equation}\label{eq:upper}
 \tau(G)\le
 \left(\frac{n}{n-1}\right)^{n-1}q^q(q+1)^{q^2-2}.
\end{equation}
\end{proposition}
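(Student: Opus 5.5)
The plan is to chain Theorem~\ref{thm:GM}, Lemma~\ref{lem:balance} and Theorem~\ref{thm:Furedi}. Let $G$ be a connected $C_4$-free graph on $n=q^2+q+1$ vertices, with degrees $d_1,\dots,d_n$ and $S=\sum_i d_i=2e(G)$.

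\textbf{Step 1: set up the ingredients.} Since $G$ is connected, every $d_i\ge1$. Also $S\ge 2(n-1)\ge n$, so the degree sequence is admissible in the definition of $B_n(S)$, and $\prod_i d_i\le B_n(S)$. Theorem~\ref{thm:GM} then gives
\[
 \tau(G)\le\left(\frac{n}{n-1}\right)^{n-1}\frac{B_n(S)}{S}.
\]

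\textbf{Step 2: replace $S$ by its largest possible value.} By Theorem~\ref{thm:Furedi} (this is where $q\ge14$ is used), $S\le S_{\max}:=q(q+1)^2$. Lemma~\ref{lem:balance} says that $B_n(S)/S$ is strictly increasing in $S$. Hence $B_n(S)/S\le B_n(S_{\max})/S_{\max}$, and it remains to evaluate this quotient.

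\textbf{Step 3: compute the balanced product.} We write $S_{\max}$ in the form $an+r$ with $0\le r<n$. Since $n(q+1)=q^3+2q^2+2q+1$ and $S_{\max}=q^3+2q^2+q$,
\[
 S_{\max}=(q+1)n-(q+1)=qn+(n-q-1)=qn+q^2 .
\]
So $a=q$ and $r=q^2<n$. Formula~\eqref{eq:balanced-product} gives
\[
 B_n(S_{\max})=q^{\,n-q^2}(q+1)^{q^2}=q^{q+1}(q+1)^{q^2}.
\]
Dividing by $S_{\max}=q(q+1)^2$ yields
\[
 \frac{B_n(S_{\max})}{S_{\max}}=q^q(q+1)^{q^2-2},
\]
which is exactly the bound~\eqref{eq:upper}.

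None of these steps is a real obstacle. The only care needed is in Step 2: the monotonicity in Lemma~\ref{lem:balance} must be applied over the full range $n\le S\le S_{\max}$, which is justified by connectivity at the lower end and by F\"uredi's theorem at the upper end. Beyond that, one only has to check the arithmetic identification $a=q$, $r=q^2$.
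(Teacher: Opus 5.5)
Your proposal is correct and follows the paper's proof essentially step for step. You chain Grone--Merris with the degree-balancing lemma, including its monotonicity of $B_n(S)/S$ in $S$, then apply F\"uredi's bound $S\le q(q+1)^2$, and evaluate $S_{\max}=qn+q^2$ to get $B_n(S_{\max})=q^{q+1}(q+1)^{q^2}$; your explicit check that connectivity gives $S\ge 2(n-1)\ge n$, so the lemma applies, is a detail the paper leaves implicit.
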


\begin{proof}
Put $S=\sum_v d(v)=2e(G)$. By Theorem~\ref{thm:Furedi},
\[
 S\le S_{\max}:=q(q+1)^2.
\]
By Theorem~\ref{thm:GM} and Lemma~\ref{lem:balance},
\[
 \tau(G)\le
 \left(\frac{n}{n-1}\right)^{n-1}\frac{B_n(S)}{S}
 \le
 \left(\frac{n}{n-1}\right)^{n-1}\frac{B_n(S_{\max})}{S_{\max}}.
\]
Since
\[
 S_{\max}=q(q+1)^2=qn+q^2,
\]
the balanced sequence with sum $S_{\max}$ has $q+1$ entries equal to $q$ and $q^2$ entries equal to
$q+1$. Hence
\[
 B_n(S_{\max})=q^{q+1}(q+1)^{q^2}.
\]
Dividing by $S_{\max}=q(q+1)^2$ gives~\eqref{eq:upper}.
\end{proof}

\begin{proof}[Proof of Theorem~\ref{thm:main}]
The lower bound follows from Corollary~\ref{cor:polarity-extremal}, while the upper bound is
Proposition~\ref{prop:upper}. Let
\[
 U_q:=\left(\frac{n}{n-1}\right)^{n-1}q^q(q+1)^{q^2-2}.
\]
Then
\[
 \log\frac{U_q}{n^{(n-3)/2}}
 =(n-1)\log\frac{n}{n-1}+q\log q+(q^2-2)\log(q+1)
 -\frac{n-3}{2}\log n.
\]
Substituting $n=q^2+q+1$ and expanding $\log(1+x)$ at $x=0$ gives
\[
 \log\frac{U_q}{n^{(n-3)/2}}
 =\frac q2-\frac14+O\!\left(\frac1q\right),
\]
which is~\eqref{eq:gap-expansion}. Since $q=\sqrt n+O(1)$, equation~\eqref{eq:main-asymptotic}
follows.
\end{proof}

The balancing argument is notable in one respect: the optimizing degree sequence is exactly the degree
sequence of the simple orthogonal polarity graph, namely $q+1$ vertices of degree $q$ and $q^2$
vertices of degree $q+1$. Nevertheless, the Grone--Merris estimate still leaves an a gap exponential in $q$. The next section makes this limitation quantitative.

\section{Edge deficit and the structural gap}\label{sec:deficit}

Let $S_{\max}=2m_{\max}=q(q+1)^2$ and let $U_q$ denote the upper bound in~\eqref{eq:upper}.

\begin{proposition}[Deficit bound]\label{prop:deficit}
Let $q\ge14$, $n=q^2+q+1$, and let $G$ be a connected $C_4$-free graph on $n$ vertices with
\[
 e(G)=m_{\max}-t,
 \qquad 0\le t\le \frac{q^2}{2}.
\]
Then
\begin{equation}\label{eq:deficit}
 \tau(G)\le
 U_q\left(\frac{q}{q+1}\right)^{2t}
 \frac{S_{\max}}{S_{\max}-2t}.
\end{equation}
\end{proposition}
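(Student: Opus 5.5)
We combine Theorem~\ref{thm:GM} with Lemma~\ref{lem:balance}, now evaluated at the actual degree sum $S=S_{\max}-2t$ rather than at $S_{\max}$. Then we compare the balanced product $B_n(S_{\max}-2t)$ with $B_n(S_{\max})$ one unit at a time.

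First, Theorem~\ref{thm:GM} gives
\[
 \tau(G)\le\left(\frac{n}{n-1}\right)^{n-1}\frac{B_n(S)}{S}, \qquad S=S_{\max}-2t,
\]
since the degree product is at most $B_n(S)$. The hypothesis $t\le q^2/2$ guarantees $S\ge S_{\max}-q^2=qn\ge n$, so Lemma~\ref{lem:balance} applies. Moreover, every $S'$ in the range $[S,S_{\max}]$ has the form $S'=qn+r'$ with $0\le r'\le q^2<n$, so $a=q$ throughout the range.

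The key step is the unit-decrement comparison. From the proof of Lemma~\ref{lem:balance}, passing from $S'$ to $S'+1$ with $a=q$ multiplies the balanced product by exactly $(q+1)/q$. This holds for each of the $2t$ steps from $S$ up to $S_{\max}$, because in all of them the quotient stays $a=q$. The top step, from $S_{\max}-1=qn+q^2-1$ to $S_{\max}$, also has $r'=q^2-1<n-1$, so no rollover occurs. Hence
\[
 B_n(S_{\max}-2t)=B_n(S_{\max})\left(\frac{q}{q+1}\right)^{2t}.
\]

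Substituting, we obtain
\[
 \tau(G)\le\left(\frac{n}{n-1}\right)^{n-1}\frac{B_n(S_{\max})}{S_{\max}}\left(\frac{q}{q+1}\right)^{2t}\frac{S_{\max}}{S_{\max}-2t}.
\]
By the computation in Proposition~\ref{prop:upper}, the first two factors together equal $U_q$, and this gives \eqref{eq:deficit}.

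The only point needing care is verifying that the quotient $a$ in $S'=an+r'$ remains $q$ across the whole interval, which is exactly what the restriction $t\le q^2/2$ ensures. Everything else is bookkeeping.
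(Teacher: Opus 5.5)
Your proof is correct and follows the paper's argument: you apply Grone--Merris at the actual degree sum $S=S_{\max}-2t$ and use $t\le q^2/2$ to keep the quotient $a=q$. The only cosmetic difference is that you get $B_n(S_{\max}-2t)=B_n(S_{\max})\bigl(q/(q+1)\bigr)^{2t}$ by telescoping the unit-step factor $(q+1)/q$, whereas the paper reads it off directly from the closed form $q^{q+1+2t}(q+1)^{q^2-2t}$.
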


\begin{proof}
Here $S=2e(G)=S_{\max}-2t = q(q+1)^2-2t$. Since $t\le q^2/2$,
\[
 qn\le S\le S_{\max}<(q+1)n.
\]
Writing $S=qn+r =qn+(q^2-2t)$, we have $r=q^2-2t$, and Lemma~\ref{lem:balance} gives
\[
 B_n(S)=q^{q+1+2t}(q+1)^{q^2-2t}
 =B_n(S_{\max})\left(\frac{q}{q+1}\right)^{2t}.
\]
Applying Theorem~\ref{thm:GM} and comparing with $U_q$ yields~\eqref{eq:deficit}.
\end{proof}

Uniformly for $0\le t\le q^2/2$,
\begin{equation}\label{eq:deficit-asymptotic}
 \log\left[
 \left(\frac{q}{q+1}\right)^{2t}
 \frac{S_{\max}}{S_{\max}-2t}
 \right]
 =-\frac{2t}{q}+O\!\left(\frac{t}{q^2}\right).
\end{equation}
On the other hand, Theorem~\ref{thm:main} gives
\[
 \log\frac{U_q}{\tau(ER_q)}=\frac q2-\frac14+O\!\left(\frac1q\right).
\]
Thus the loss in the degree envelope becomes comparable with its intrinsic gap to $ER_q$ only for
$t$ of order $q^2$. This is far outside the order-$q$ regime in which the structural stability theorem
of He, Ma and Yang already forces containment in a polarity graph. The two arguments are therefore
complementary: degree information gives a global asymptotic envelope, while polarity stability gives an
exact result very close to the extremal edge count.

To bridge the region between them, a refinement of the present degree-product approach should exploit
information not used by the degree envelope. Every $C_4$-free graph satisfies
\begin{equation}\label{eq:codegree}
 |N(u)\cap N(v)|\le1\quad(u\ne v),
 \qquad
 \sum_{v\in V(G)}\binom{d(v)}2\le\binom n2.
\end{equation}
Neither constraint is used by Proposition~\ref{prop:upper} after the edge bound is inserted. A natural
next step toward Conjecture~\ref{conj:main} is therefore a determinant or Laplacian inequality that
incorporates the codegree restriction~\eqref{eq:codegree}. Quantitatively, a refinement of the present
degree envelope would need to recover an additional factor of order $\exp(-q/2)$.

\section{Concluding remarks}

The definition~\eqref{eq:def-st} makes sense for any forbidden graph $H$. Applying the
arithmetic--geometric mean inequality to the Grone--Merris estimate recovers Grimmett's classical
edge-only bound~\cite{Grimmett76} and yields
\begin{equation}\label{eq:general-H}
 \mathrm{st}(n,H)
 \le \frac1n\left(\frac{2\,\mathrm{ex}(n,H)}{n-1}\right)^{n-1}.
\end{equation}
For fixed $t\ge2$, F\"uredi's asymptotic theorem for $K_{2,t}$~\cite{FurediK2t} consequently gives
\[
 \log \mathrm{st}(n,K_{2,t})\le \frac n2\log n+O_t(n),
\]
while the Bondy--Simonovits theorem~\cite{BondySimonovits74} gives, for fixed $k\ge2$,
\[
 \log \mathrm{st}(n,C_{2k})\le \frac nk\log n+O_k(n).
\]
These observations suggest a general question: when does an edge-extremal construction for a forbidden
subgraph also maximize the number of spanning trees? For $C_4$, Theorem~\ref{thm:stability-tau}
answers this affirmatively in every fixed stability window
$e(G)\ge m_{\max}-\frac c2q$ with $c<1$ and $q$ sufficiently large and even, while the global problem
and the behavior of $\mathrm{st}(n,C_4)$ away from projective-plane orders remain open.\\

\noindent \textbf{AI tools disclosure.} Generative AI tools were used for language polishing and minor editorial suggestions.

\bibliographystyle{plain}

\end{document}